\newtheorem{theorem}{Theorem}[section]
\newtheorem{lemma}[theorem]{Lemma}
\theoremstyle{definition}
\theoremstyle{remark}
\newtheorem{remark}[theorem]{Remark}
\numberwithin{equation}{section}
\begin{document}
\setcounter{page}{1}

\title[Subfactors and Hecke groups]{Subfactors and Hecke groups}

\author[A.Glubokov]{Andrey Glubokov$^1$}

\author[I.Nikolaev]{Igor Nikolaev$^2$}


\address{$^{1}$ Department of Mathematics,  Ave Maria University, 
5050 Ave Maria Blvd.,  Ave Maria, FL 34142,  United States.}
\email{\textcolor[rgb]{0.00,0.00,0.84}{andrey.glubokov@avemaria.edu}}

\address{$^{2}$ Department of Mathematics and Computer Science, St.~John's University, 8000 Utopia Parkway,  
New York,  NY 11439, United States.}
\email{\textcolor[rgb]{0.00,0.00,0.84}{igor.v.nikolaev@gmail.com}}


\subjclass[2010]{Primary  46L37; Secondary 20H10.}

\keywords{Hecke group, subfactors, cluster $C^*$-algebra.}


\begin{abstract}
We study a relation between the Hecke groups and the  index of  subfactors in 
a von Neumann algebra. Such a problem was raised  by V.~F.~R.~Jones. 
We solve the problem using the notion of a cluster $C^*$-algebra. 
\end{abstract}

\maketitle

\section{Introduction}
The following  problem can be found  in  [Jones 1991] \cite[p.24]{J}:

\bigskip
\begin{minipage}[5cm]{11cm}
\textit{
``Consider the subgroup $G_{\lambda}$ of $SL_2(\mathbf{R})$ generated by 
$\left(\begin{smallmatrix} 1 & \lambda\cr 0 & 1\end{smallmatrix}\right)$
and  $\left(\begin{smallmatrix} 0 & 1\cr -1 & 0\end{smallmatrix}\right)$.
For what values of $\lambda>0$ is it discrete?  Answer:
$\lambda=2\cos \left({\pi\over n}\right), ~n=3,4,\dots$  or 
$\lambda\ge 2$.  (...)  We have been unable to find any 
direct connection between this result and  Theorem 3.1
(Jones Index Theorem).  It is a tantalizing situation.''  
}
\end{minipage}

\bigskip\noindent
The aim of our note is to solve the problem in terms of the cluster 
$C^*$-algebras \cite[Section 4.4.3]{Nik1}. To give an idea,  let $\mathscr{D}=\{z=x+iy\in\mathbf{C} ~|~ r\le |z|\le R\}$ 
be an annulus in the complex plane. 
Consider the  Schottky  uniformization of $\mathscr{D}$,  i.e. 
\begin{equation}\label{eq1.1}
\mathscr{D}\cong \mathbf{C}P^1 / A^{\mathbf{Z}},  
\end{equation}
where $\mathbf{C}P^1:=\mathbf{C} \cup \{\infty\}$ is the Riemann sphere and 
$A\in SL_2(\mathbf{C})/\pm I$ is a matrix acting on the   
$\mathbf{C}P^1$ by the M\"obius transformation. 
It follows from [Glubokov \& Nikolaev 2018] \cite{GluNik1} and Section 2.2,  that 
the index of subfactors in a von Neumann algebra coincides with the square of 
trace of matrix $A$,  i.e. 
\begin{equation}\label{eq1.2}
tr^2 ~(A) \in [4, \infty) ~\bigcup  ~\{ 4\cos^2\left({\pi\over n}\right) ~|~n\ge 3\}.
\end{equation}

\medskip
To solve the Jones Problem,  we prove in Section 3  that $\mathscr{D}$
 is a ramified double cover of the  orbifold  $\mathbb{H}/G_{\lambda}$,
 where  $\mathbb{H}:=\{x+iy\in \mathbf{C} ~|~ y>0\}$ is  the Lobachevsky half-plane
 and the group $G_{\lambda}$ acts on $\mathbb{H}$ by the linear fractional trasformations.
 Since such a cover takes  the square root of the moduli 
parameter  $tr^2 ~(A)$ of $\mathscr{D}$, we conclude that $\lambda=tr~(A)$.
In other words,  the Jones Index Theorem (\ref{eq1.2})  is equivalent to the following well-known result:
\begin{theorem}\label{thm1.1}
{\bf ([Hecke 1936] \cite[Satz 1,2 \& 6]{Hec1})}
The $G_{\lambda}$ is a discrete subgroup of $SL_2(\mathbf{R})$
if and only if
\begin{equation}\label{eq1.3}
\lambda\in [2, \infty) ~\bigcup  ~\{ 2\cos \left({\pi\over n}\right) ~|~n\ge 3\}.
\end{equation}
\end{theorem}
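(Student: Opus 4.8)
The plan is to prove the classical Hecke theorem directly, treating it as a statement about discrete subgroups of $PSL_2(\mathbf{R})$ generated by a parabolic and an elliptic element. Write $T_\lambda=\left(\begin{smallmatrix} 1 & \lambda\cr 0 & 1\end{smallmatrix}\right)$ and $S=\left(\begin{smallmatrix} 0 & 1\cr -1 & 0\end{smallmatrix}\right)$, so that $S$ has order $2$ in $PSL_2(\mathbf{R})$ and acts on $\mathbb{H}$ as $z\mapsto -1/z$, while $T_\lambda$ is the translation $z\mapsto z+\lambda$. First I would establish the easy direction: for each $n\ge 3$ the value $\lambda=2\cos(\pi/n)$ yields a discrete group, and the range $\lambda\ge 2$ does too. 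For the exceptional values this can be done by exhibiting $G_\lambda$ as the $(2,n,\infty)$ triangle group; equivalently, one computes that $ST_\lambda$ is elliptic of order $n$ precisely when $\lambda=2\cos(\pi/n)$, since $tr(ST_\lambda)=-\lambda$ and an element of $PSL_2(\mathbf{R})$ with $|tr|=2\cos(\pi/n)<2$ is elliptic of finite order $n$.

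The heart of the argument is the fundamental-domain construction, and this is the step I expect to be the main obstacle. The idea is to build a candidate fundamental polygon for the action of $G_\lambda$ on $\mathbb{H}$ and apply the Poincar\'e polygon theorem, which guarantees discreteness once the side-pairing transformations satisfy the cycle conditions. Concretely, one takes the region
\begin{equation}\label{eq:fund}
F_\lambda=\left\{z\in\mathbb{H} ~\Big|~ |\mathrm{Re}\,z|\le \tfrac{\lambda}{2}, ~|z|\ge 1\right\},
\end{equation}
whose vertical sides are paired by $T_\lambda$ and whose bottom arc is paired by $S$. For $\lambda=2\cos(\pi/n)$ the two vertical geodesics $\mathrm{Re}\,z=\pm\lambda/2$ meet the unit circle at angle $\pi/n$, producing an elliptic vertex cycle of order $n$; the cycle condition is then satisfied, so Poincar\'e's theorem applies and $G_\lambda$ is discrete. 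For $\lambda\ge 2$ the vertical lines are disjoint from (or tangent to) the unit circle inside $\mathbb{H}$, so $F_\lambda$ has infinite hyperbolic width at the base and the side pairings again verify the hypotheses, yielding discreteness.

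For the converse, the hard part is ruling out every $\lambda$ in the open gaps $(2\cos(\pi/n),2\cos(\pi/(n+1)))$ and in $(1,2)$ not of Hecke form. I would argue that if $0<\lambda<2$ then $ST_\lambda$ is elliptic, and discreteness forces an elliptic element to have finite order; hence its rotation angle must be a rational multiple of $\pi$, which already restricts $\lambda$ to a discrete set. The more delicate point is to show that no \emph{other} rational-angle values survive: the standard approach is the $\lambda$-continued-fraction / Hecke recursion, tracking the orbit of a boundary point under alternate applications of $T_\lambda$ and $S$ and showing that for non-Hecke $\lambda\in(0,2)$ this orbit accumulates, forcing $T_\lambda^{k}$ to come arbitrarily close to the identity and contradicting discreteness. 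I expect the bookkeeping in this accumulation argument—controlling the interval exchange induced by $S$ and $T_\lambda$ on $\partial\mathbb{H}$—to be the genuinely technical core, whereas the Poincar\'e-theorem half is largely a matter of verifying angle conditions. Finally, I would record that, via the ramified double cover $\mathscr{D}\to\mathbb{H}/G_\lambda$ of Section 3, the admissible set (\ref{eq1.3}) is exactly the square root of the set in (\ref{eq1.2}), so that $\lambda=tr(A)$ and the two theorems are equivalent.
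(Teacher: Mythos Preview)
Your argument is the classical, self-contained proof of Hecke's theorem via Poincar\'e's polygon theorem and the $\lambda$-continued-fraction accumulation argument; it is correct in outline, and your identification of the elliptic-order step and the orbit-accumulation step as the two nontrivial ingredients is accurate. The paper, however, does something entirely different and in a sense opposite: it does \emph{not} prove Theorem~\ref{thm1.1} from scratch, but rather deduces it from the Jones set (\ref{eq1.2}), which was obtained earlier via cluster $C^*$-algebras. The whole proof consists of building the ramified double cover $\rho:\mathscr{D}\to\mathbb{H}/G_\lambda$ (Lemmas~\ref{lm3.1}--\ref{lm3.4}) and reading off that the moduli parameter of the Hecke orbifold satisfies $\lambda^2=tr^2(A)$, so the square root of (\ref{eq1.2}) gives (\ref{eq1.3}). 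What you relegate to a final sentence is, for the paper, the entire argument; conversely, the fundamental-domain and continued-fraction machinery you develop plays no role there. Your route yields an independent proof and buys you nothing about the Jones connection; the paper's route is precisely that connection, but relies on (\ref{eq1.2}) as a black box and gives no direct hyperbolic-geometric reason why the intermediate $\lambda$'s fail.
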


\medskip
\begin{remark}\label{rmk1.1}
The group $G_{\lambda}$  appears in the study of the
Riemann zeta  function [Hecke 1936] \cite{Hec1}.  In particular, Hecke's Theorem  
says that the space of automorphic 
functions corresponding to $G_{\lambda}$ is (i) infinite-dimensional,
if $\lambda>2$ or (ii)  finite-dimensional,  if 
$\lambda\in\{ 2\cos \left({\pi\over n}\right) ~|~n\ge 3\}$. 
The proof of this fact  is purely analytic [Hecke 1936] \cite{Hec1}. 
On the other hand,  cases  (i) and (ii)  follow 
from the Sherman-Zelevinsky Theorem for the cluster
$C^*$-algebras of rank 2, see  
[Glubokov \& Nikolaev 2018] \cite{GluNik1}. 
  \end{remark}
The article is organized as follows. Section 2 contains a brief review
of preliminary results.  Theorem \ref{thm1.1} is proved  in Section 3.

\section{Preliminaries}
The cluster $C^*$-algebras and their $K$-theory are covered in 
 \cite[Section 4.4.3]{Nik1}.  A correspondence between the cluster
 $C^*$-algebra of an annulus $\mathscr{D}$ and the Jones Index Theorem
 was established in [Glubokov \& Nikolaev 2018] \cite{GluNik1}.
 The Hecke groups were introduced in  [Hecke 1936] \cite{Hec1}.

\subsection{Cluster $C^*$-algebras}
A {\it cluster algebra}  $\mathscr{A}(\mathbf{x}, B)$ of rank $n$ 
is a subring of the field  of  rational functions in $n$ variables
depending  on a  cluster  of variables  $\mathbf{x}=(x_1,\dots, x_n)$
and a skew-symmetric matrix  $B=(b_{ij})\in M_n(\mathbf{Z})$; 
the pair  $(\mathbf{x}, B)$ is called a  seed.
A new cluster $\mathbf{x}'=(x_1,\dots,x_k',\dots,  x_n)$ and a new
skew-symmetric matrix $B'=(b_{ij}')$ is obtained from 
$(\mathbf{x}, B)$ by the   exchange relations:
\begin{eqnarray}\label{eq2.1}
x_kx_k'  &=& \prod_{i=1}^n  x_i^{\max(b_{ik}, 0)} + \prod_{i=1}^n  x_i^{\max(-b_{ik}, 0)},\cr 
b_{ij}' &=& 
\begin{cases}
-b_{ij}  & \mbox{if}   ~i=k  ~\mbox{or}  ~j=k\cr
b_{ij}+{|b_{ik}|b_{kj}+b_{ik}|b_{kj}|\over 2}  & \mbox{otherwise.}
\end{cases}
\end{eqnarray}
The seed $(\mathbf{x}', B')$ is said to be a  mutation of $(\mathbf{x}, B)$ in direction $k$,
where $1\le k\le n$;   the algebra  $\mathscr{A}(\mathbf{x}, B)$ is  generated by cluster  variables $\{x_i\}_{i=1}^{\infty}$
obtained from the initial seed $(\mathbf{x}, B)$ by the iteration of mutations  in all possible
directions $k$.   The Laurent phenomenon  says  that  $\mathscr{A}(\mathbf{x}, B)\subset \mathbf{Z}[\mathbf{x}^{\pm 1}]$,
where  $\mathbf{Z}[\mathbf{x}^{\pm 1}]$ is the ring of  the Laurent polynomials in  variables $\mathbf{x}=(x_1,\dots,x_n)$
depending on an initial seed $(\mathbf{x}, B)$.
 The  $\mathscr{A}(\mathbf{x}, B)$  is a commutative algebra with an additive abelian
semigroup consisting of the Laurent polynomials with positive coefficients. 
Thus the algebra  $\mathscr{A}(\mathbf{x}, B)$
is a countable abelian group with an order satisfying the Riesz interpolation property, i.e. a 
 dimension group. 
A  {\it cluster $C^*$-algebra}  $\mathbb{A}(\mathbf{x}, B)$  is   an  AF-algebra,  
such that 
\begin{equation}\label{eq2.2}
K_0(\mathbb{A}(\mathbf{x}, B))\cong  \mathscr{A}(\mathbf{x}, B),
\end{equation}
where $\cong$ is an isomorphism of the dimension groups \cite[Section 4.4.3]{Nik1}.

\subsection{Schottky uniformization of $\mathscr{D}$}
Consider the Riemann surface $\mathscr{D}$ (an annulus) defined 
by the formula (\ref{eq1.1}).  We shall use the Schottky uniformization 
of  $\mathscr{D}$ by the loxodromic transformations. 
Namely, let $\mathbf{C}P^1:=\mathbf{C} ~\cup ~\{\infty\}$ be the Riemann
sphere and consider the M\"obius transformation of $\mathbf{C}P^1$ 
given by the formula:
\begin{equation}\label{eq2.3}
z\longmapsto \mathbf{k} ~z, \quad\hbox{where} ~ z\in \mathbf{C}P^1 \quad\hbox{and}
\quad|\mathbf{k}|\neq 1. 
\end{equation}
It is easy to see, that (\ref{eq2.3}) can be written in the matrix form:
\begin{equation}\label{eq2.4}
A=
\left(
\begin{matrix} \sqrt{\mathbf{k}} & 0\cr 0 & {1\over  \sqrt{\mathbf{k}}}
\end{matrix}\right)
\in PSL_2(\mathbf{C}):=SL_2(\mathbf{C})/\pm I.
\end{equation}
It is well known, that (\ref{eq2.3}) is a loxodromic transformation if and only if
\begin{equation}\label{eq2.5}
tr^2~(A)={(\mathbf{k}+1)^2\over\mathbf{k}}\in\mathbf{C} ~\backslash ~[0,4]. 
\end{equation}
The {\it Schottky uniformization} of $\mathscr{D}$ is given by the formula:
\begin{equation}\label{eq2.6}
\mathscr{D}\cong \mathbf{C}P^1 / A^{\mathbf{Z}}. 
\end{equation}

\medskip
\subsection{Admissible values of  $tr^2 ~(A)$}
Recall that the moduli space of the annulus $\mathscr{D}$ is given
by the formula:
\begin{equation}\label{eq2.7}
T_{\mathscr{D}}=\left\{t={R\over r} ~|~ t>1\right\}.
\end{equation}
We consider a cluster $C^*$-algebra $\mathbb{A}(\mathscr{D})$ associated to 
a canonical triangulation of $\mathscr{D}$ 
[Fomin,  Shapiro  \& Thurston  2008]  \cite[Example 4.4]{FoShaThu1}.
It follows from the Sherman-Zelevinsky Theorem for the algebra $\mathbb{A}(\mathscr{D})$, 
that the admissible values  of the ``index'' ${(t+1)^2\over t}$  must belong to the set:
\begin{equation}\label{eq2.8}
[4, \infty) ~\bigcup  ~\{ 4\cos^2\left({\pi\over n}\right) ~|~n\ge 3\},
\end{equation}
 see [Glubokov \& Nikolaev 2018] \cite{GluNik1} for the proof. 

\medskip
We set $\mathbf{k}=t$ in the formulas (\ref{eq2.3}) - (\ref{eq2.5}).  Comparing (\ref{eq2.5}) 
and (\ref{eq2.8}), one gets 
\begin{equation}\label{eq2.9}
tr^2 ~(A)\in [4, \infty) ~\bigcup  ~\{ 4\cos^2\left({\pi\over n}\right) ~|~n\ge 3\}.
\end{equation}

\medskip
\begin{remark}\label{rmk1.2}
It follows from (\ref{eq2.5}) that $A$ is a  loxodromic transformation if and only 
if $tr^2~(A)\in (4,\infty)$.  The case $tr^2~(A)\in \{ 4\cos^2\left({\pi\over n}\right) ~|~n\ge 3\}$
corresponds to an elliptic transformation $A$ of order $n$. Finally, that case 
 $tr^2~(A)=4$ gives a parabolic transformation $A$.  Note that for the elliptic 
 and parabolic transformations, the values of parameter $t$ in (\ref{eq2.7}) are
  the  $n$-th roots of unity.  
 \end{remark}

\section{Proof of theorem \ref{thm1.1}}
We split the proof in a series of lemmas.
\begin{lemma}\label{lm3.1}
Let $p_1,p_2\in  \mathbf{C}P^1$ be two points on the Riemann sphere,
such that $p_1\ne p_2$. Then  there exists a double covering map
\begin{equation}\label{eq3.1}
 \mathbf{p} ~: ~\mathbf{C}P^1\to  \mathbf{C}P^1
 \end{equation}
 ramified over the points $p_1$ and $p_2$.
 \end{lemma}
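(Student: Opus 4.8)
The plan is to construct the double cover $\mathbf{p}$ explicitly as a degree-two rational map of $\mathbf{C}P^1$ whose two critical values coincide with the prescribed ramification points $p_1$ and $p_2$. The guiding principle is that a degree-two map of the Riemann sphere is a rational function of the form $z \mapsto (az^2+bz+c)/(dz^2+ez+f)$, and such a map has exactly two ramification points by the Riemann--Hurwitz formula: if $\chi(\mathbf{C}P^1)=2$ and the degree is $2$, then $2 = 2\cdot 2 - R$ forces the total ramification $R = 2$, so there are precisely two simple branch points. Thus the real content is not the existence of \emph{some} double cover, but the freedom to place the two branch values wherever we like.

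First I would reduce to a normal form using the triple-transitivity of the M\"obius group $PSL_2(\mathbf{C})$ acting on $\mathbf{C}P^1$. Choose M\"obius transformations $g, h \in PSL_2(\mathbf{C})$ so that $g$ sends the pair $\{p_1, p_2\}$ to the standard pair $\{0, \infty\}$ and $h$ is an automorphism we will fix later. The model double cover ramified over $\{0,\infty\}$ is the squaring map $q : z \mapsto z^2$, whose derivative $2z$ vanishes only at $z=0$ (with branch value $0$) and whose behaviour at $\infty$ in the coordinate $w = 1/z$ is again $w \mapsto w^2$, giving a branch value $\infty$. Then I would define $\mathbf{p} := g^{-1} \circ q \circ h$ for a suitable choice, and verify that the branch values of $\mathbf{p}$ are exactly $g^{-1}(0) = p_1$ and $g^{-1}(\infty) = p_2$, since post-composition by the biholomorphism $g^{-1}$ carries branch values to their images and pre-composition by the biholomorphism $h$ merely relabels the source sheets without affecting where ramification occurs downstream.

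The remaining steps are routine verifications: that $\mathbf{p}$ is holomorphic and surjective of degree two (being a composition of a degree-two map with two biholomorphisms), and that the only critical points map to $p_1$ and $p_2$. I would confirm this via the local expansion, checking in the chart at $\infty$ that the squaring map genuinely ramifies there, which is the one spot where a careless computation could go wrong.

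The main obstacle, such as it is, is conceptual rather than technical: one must be careful that ``ramified over $p_1$ and $p_2$'' means these are the branch \emph{values} (downstairs), and that the squaring map ramifies at \emph{both} $0$ and $\infty$ and nowhere else. Once the normal form $z \mapsto z^2$ over $\{0,\infty\}$ is correctly identified together with its ramification at infinity, the transport by M\"obius transformations finishes the argument immediately. I expect the hypothesis $p_1 \neq p_2$ to enter precisely here: it is exactly what allows a single M\"obius transformation to separate the two points onto $0$ and $\infty$, so that the two branch values remain distinct.
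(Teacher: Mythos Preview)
Your argument is correct and in fact more self-contained than the paper's. The paper does not construct $\mathbf{p}$ explicitly: it first checks the Riemann--Hurwitz necessary condition $\chi(\mathbf{C}P^1)=2\chi(\mathbf{C}P^1)-\sum_{i=1}^2(e_i-1)$ with $\chi(\mathbf{C}P^1)=e_1=e_2=2$, and then for sufficiency invokes Gersten's general theorem on branched covers of the $2$-sphere, leaving the verification to the reader. Your route instead writes down the model cover $q(z)=z^2$ ramified over $\{0,\infty\}$ and transports the branch values to $\{p_1,p_2\}$ by a M\"obius transformation, which is both elementary and constructive. The paper later (Remark~\ref{rmk3.2}) normalises $p_1=0$, $p_2=\infty$ anyway and uses the local model $z\mapsto z^2$ in the proof of Lemma~\ref{lm3.4}, so your explicit description is exactly what is needed downstream; the only thing the paper's approach buys is a pointer to the general realisation problem for branch data, which is unnecessary in the degree-two case.
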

\begin{proof}
(i) Recall that the necessary condition for the existence of $\mathbf{p}$ 
is given by the Riemann-Hurwitz formula:
\begin{equation}\label{eq3.2}
 \chi (\mathbf{C}P^1)=2 \chi (\mathbf{C}P^1)-\sum_{i=1}^2 (e_i-1),
 \end{equation}
where $\chi (\mathbf{C}P^1)$ is the Euler characteristic and $e_i$ is the 
degree of the map $z\mapsto z^{e_i}$ in  the ramification point $p_i$. 
Since $\chi (\mathbf{C}P^1)=e_1=e_2=2$, we conclude that the condition 
(\ref{eq3.2})  is satisfied. 

\medskip
(ii) The sufficient condition for the existence of $\mathbf{p}$ 
can be verified directly using [Gersten 1987] \cite[Theorem 1.5]{Ger1}. 
We leave it as an exercise to the reader. 

\medskip
Lemma \ref{lm3.1} is proved. 
\end{proof}

\medskip
\begin{remark}\label{rmk3.2}
We assume further that in lemma \ref{lm3.1} we have $p_1=0$ and $p_2=\infty$. 
Our assumption is not 
restrictive, since any two points $p_1,p_2\in\mathbf{C}P^1$ can be put
into such a position by a M\"obius transformation.   
\end{remark}

\begin{figure}[h]

\begin{tikzpicture}
\draw[line width=1pt][color=black] (0, 0) circle (2.5);
\draw[line width=1pt][color=black] (7.5, 0) circle (2.5);

\coordinate [label=left:$\rho$] (A) at (4,.5);
\coordinate [label=left:$\mathscr{D}$] (A) at (0.3,-3);
\coordinate [label=left:$\mathbb{H}/G_{\lambda}$] (B) at (8.5,-3);


\draw[fill=black] (0,-1.5) circle(.07) node[above] {$ \infty$};
\draw[fill=black] (7.5,-1.5) circle(.07) node[above] {$ e_p$};

\coordinate [label=left:$D(tr^{2}(A))$] (A) at (1,-.5);
\coordinate [label=left:$D(\lambda)$] (A) at (8,-.5);

\draw[line width=1pt][->] (3.2,0) -- (4.2,0);

\draw[line width=1pt][fill=black!15] (0,1) circle(1.0);
\draw[line width=1pt][fill=black!15] (7.5,1) circle(1.0);

\end{tikzpicture}

\caption{Covering map  $\rho :  \mathscr{D}\to  \mathbb{H}/G_{\lambda}$.}
\end{figure}

\begin{lemma}\label{lm3.3}
Consider a  Riemann surface:
\begin{equation}\label{eq3.3}
\mathscr{H}:=\mathbf{C}P^1 ~\backslash ~\{D_0, \infty\}, 
\end{equation}
where $D_0$ is a disk containing  point $0\in \mathbf{C}P^1$. 
Then there exists  a double covering map
\begin{equation}\label{eq3.4}
 \rho ~: ~\mathscr{D}\to  \mathscr{H}. 
 \end{equation}
\end{lemma}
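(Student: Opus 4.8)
The plan is to build $\rho$ by restricting the ramified double cover $\mathbf{p}\colon \mathbf{C}P^1\to\mathbf{C}P^1$ supplied by Lemma \ref{lm3.1}. By Remark \ref{rmk3.2} the two branch points may be normalized to $0$ and $\infty$, so that in a suitable affine coordinate $\mathbf{p}$ is the squaring map $z\mapsto z^2$, with deck involution $z\mapsto -z$. This is, up to the M\"obius normalization of Remark \ref{rmk3.2}, the unique degree-two holomorphic self-map of the sphere branched over $\{0,\infty\}$, and it is exactly the cover that will extract the square root of the modulus announced in the Introduction.

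First I would compute the two fibres that matter. Since both $0$ and $\infty$ are branch points, $\mathbf{p}^{-1}(0)=\{0\}$ and $\mathbf{p}^{-1}(\infty)=\{\infty\}$ as sets, so the preimage of the disk $D_0\ni 0$ is a single disk $\widetilde D_0$ about $0$ on which $\mathbf{p}$ is two-to-one, branched only at the centre. Over the complement $\mathscr{H}=\mathbf{C}P^1\setminus(D_0\cup\{\infty\})$ no branch point survives, hence the restriction $\rho:=\mathbf{p}\big|_{\mathbf{p}^{-1}(\mathscr{H})}$ is an \emph{unramified} holomorphic map of degree two. Next I would identify its total space with the annulus $\mathscr{D}$ of (\ref{eq1.1}). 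A bookkeeping with the Riemann--Hurwitz relation (\ref{eq3.2}), using $\chi(\mathbf{C}P^1)=2$ together with the removal of the disk $D_0$ and the point $\infty$, gives $\chi(\mathscr{H})=0$; since $\rho$ is an unramified degree-two cover this yields $\chi(\mathbf{p}^{-1}(\mathscr{H}))=2\cdot 0=0$, the Euler characteristic of an annulus. Connectedness of the total space follows because the index-two subgroup $2\mathbf{Z}\subset\mathbf{Z}\cong\pi_1(\mathscr{H})$ selected by $z\mapsto z^2$ is the one giving the connected double cover, so $\mathbf{p}^{-1}(\mathscr{H})$ is conformally an annulus, which I would identify with $\mathscr{D}$. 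Matching the conformal modulus pins this down: because $z\mapsto z^2$ doubles the logarithmic width of an annulus, the modulus of $\mathscr{H}$ is twice that of its cover, which is precisely the square-root relation between $\lambda$ and $tr~(A)$. Setting $\rho$ equal to this restriction then produces the desired double covering $\rho\colon\mathscr{D}\to\mathscr{H}$.

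The step I expect to be the main obstacle is the end-behavior of $\rho$, namely reconciling the boundary circle $\partial D_0$ with the puncture at $\infty$. One must verify that $\rho$ is a genuine proper, surjective degree-two covering of $\mathscr{H}$ rather than a mere local biholomorphism, and that the deck involution $z\mapsto -z$ acts correctly on the two ends of the source annulus: the end lying over the removed point $\infty$ is the cusp of the orbifold $\mathbb{H}/G_{\lambda}$, while the end over $\partial D_0$ is an honest boundary circle. Checking that these two ends assemble into the annulus $\mathscr{D}$ with the correct (square-root) modulus is the delicate point; everything else is a routine consequence of Lemma \ref{lm3.1} and the explicit normal form $z\mapsto z^2$.
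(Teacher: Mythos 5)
Your proposal is correct and follows essentially the same route as the paper: both take the branched double cover $\mathbf{p}$ of Lemma \ref{lm3.1} normalized as in Remark \ref{rmk3.2}, restrict it over the complement of $D_0$ and the point $\infty$, and identify the total space with the annulus $\mathscr{D}$ of (\ref{eq3.5}). You supply more verification than the paper does (unramifiedness over $\mathscr{H}$, the Euler characteristic count, connectedness via the index-two subgroup of $\pi_1(\mathscr{H})$), whereas the paper simply contracts $D_{\infty}$ to the point $\infty$ by a homotopy and sets $\rho\equiv\mathbf{p}$; the ``delicate point'' you flag about reconciling the boundary circle with the puncture is exactly the step the paper disposes of by that contraction.
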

\begin{proof}
Let  $\mathbf{p}: \mathbf{C}P^1\to \mathbf{C}P^1$ be the double covering  map 
ramified at $0$ and $\infty$, see lemma \ref{lm3.1} and remark \ref{rmk3.2}. 
Recall that 
\begin{equation}\label{eq3.5}
\mathscr{D}\cong \mathbf{C}P^1 ~\backslash ~\{D_0, D_{\infty}\}, 
\end{equation}
where $D_{\infty}$ is a disk containing point $\infty\in \mathbf{C}P^1$. 
We use a homotopy to contract $D_{\infty}$ to the point $\infty$,
and set the map $\rho\equiv \mathbf{p}$. Comparing (\ref{eq3.3}) and (\ref{eq3.5}),
we conclude that $\rho$ is the required double covering map. 
Lemma \ref{lm3.3} is proved.   
\end{proof}

\begin{lemma}\label{lm3.4}
$\mathscr{H} \cong \mathbb{H}/G_{\lambda}$, where 
$\lambda\in [2, \infty) ~\bigcup  ~\{ 2\cos \left({\pi\over n}\right) ~|~n\ge 3\}$.
\end{lemma}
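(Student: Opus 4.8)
The plan is to realise $\mathbb{H}/G_\lambda$ as the base of the double cover $\rho$ of Lemma~\ref{lm3.3} and to match its conformal modulus with that of $\mathscr D$, thereby fixing both $\lambda$ and its admissible range. First I would describe the quotient explicitly from the standard fundamental domain of the Hecke group. Writing $G_\lambda=\langle S,T\rangle$ with $S:z\mapsto -1/z$ of order two and $T:z\mapsto z+\lambda$ parabolic, the domain $\mathcal F_\lambda=\{z\in\mathbb H:|\mathrm{Re}\,z|\le\lambda/2,\ |z|\ge 1\}$ has its vertical sides paired by $T$ and its circular arc folded by $S$. This produces a cusp at $i\infty$ coming from $T$ and a cone point of order two at $z=i$ coming from $S$, so that the underlying surface is conformally of the type of $\mathscr H=\mathbf{C}P^1\setminus\{D_0,\infty\}$, the cusp matching the removed point $\infty$. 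The nature of the third vertex of $\mathcal F_\lambda$ is then absorbed into the removed disc $D_0$: it is an order-$n$ cone point when the sides meet $|z|=1$ at $e^{\pm i\pi/n}$ (that is, $\lambda/2=\cos(\pi/n)$), a second cusp when $\lambda=2$, and a funnel when $\lambda>2$, which are exactly the elliptic, parabolic and loxodromic regimes of Remark~\ref{rmk1.2}.

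Next I would pin down $\lambda$ by a modulus computation. By Lemma~\ref{lm3.3} and Remark~\ref{rmk3.2}, $\rho$ is the degree-two map of $\mathbf{C}P^1$ ramified at $0$ and $\infty$, i.e.\ $z\mapsto z^2$ in suitable coordinates, under which the Schottky generator $A:z\mapsto\mathbf k z$ is the monodromy of $\mathscr D$ about its core curve. Because $\rho$ is two-to-one and ramified exactly at the fixed points $0,\infty$ of $A$, the core of $\mathscr D$ doubly covers the corresponding curve on $\mathbb H/G_\lambda$, so the base is governed by the square root $\sqrt{tr^2(A)}=tr(A)$ of the modulus invariant $tr^2(A)$ of $\mathscr D$. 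I would then verify that this square root is the Hecke parameter by the direct computation $ST=\left(\begin{smallmatrix}0&1\\-1&-\lambda\end{smallmatrix}\right)$, giving $tr(ST)=-\lambda$ and $tr^2(ST)=\lambda^2$; identifying the distinguished element $ST\in G_\lambda$ with the monodromy $A$ then forces $\lambda^2=tr^2(A)$, that is $\lambda=tr(A)$.

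Finally, with $\lambda=tr(A)>0$ I would read off the range directly from (\ref{eq2.9}): taking the positive square root of $tr^2(A)\in[4,\infty)\cup\{4\cos^2(\pi/n):n\ge 3\}$ gives $\lambda\in[2,\infty)\cup\{2\cos(\pi/n):n\ge 3\}$, the three subcases corresponding to the funnel, second-cusp and order-$n$ structures found above. The main obstacle is the middle step: making rigorous the assertion that the ramified double cover ``takes the square root of the modulus'' and that the Schottky data of $\mathscr D$ descend to $tr^2(ST)=\lambda^2$ on the base. This requires carefully matching the deck transformation $A$ of the Schottky uniformisation with the action of $G_\lambda$ on $\mathbb H$, and checking that the ramification points $0,\infty$ of $\rho$ are precisely the loci producing the order-two cone point and the cusp of the Hecke orbifold; once this is in place, the first and third steps reduce to bookkeeping with the fundamental domain and formula (\ref{eq2.9}).
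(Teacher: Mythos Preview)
Your overall strategy matches the paper's: describe $\mathbb{H}/G_\lambda$ as a sphere with a hole, an order-two cone point and a cusp; identify it with $\mathscr{H}$; invoke the double cover $\rho$ of Lemma~\ref{lm3.3}; argue that the modulus of $\mathscr{D}$ is the square of the Hecke parameter; and read off the admissible $\lambda$ from (\ref{eq2.9}). Two points of divergence are worth noting.

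First, your assignment of features is the reverse of the paper's. You send the cusp (coming from $T$) to the deleted point $\infty\in\mathscr{H}$ and absorb the third vertex into $D_0$, leaving the order-two cone point at $z=i$ in the interior. The paper instead sets $e_2\equiv\infty$ and $D(\lambda)\equiv D_0$. The paper's choice is the one that is consistent with $\mathscr{D}$ being smooth: since $\rho$ is ramified of degree two at $\infty$, placing the order-two cone point there is exactly what makes its preimage a regular point of $\mathscr{D}$ (cf.\ the remark following (\ref{eq3.6})). With your assignment the order-two cone point sits at an unramified interior point, so its two preimages would still be cone points of order two in $\mathscr{D}$, contradicting the annulus model. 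Your closing paragraph seems to sense this (``checking that the ramification points $0,\infty$ of $\rho$ are precisely the loci producing the order-two cone point and the cusp''), but it is not compatible with the earlier identification; you should swap the roles of the cusp and $e_2$.

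Second, the mechanism by which $\lambda^2$ becomes the modulus of $\mathscr{D}$ is handled differently. The paper argues geometrically: $\lambda=\frac{1}{\pi}|\partial D(\lambda)|$, the local model at $0\in D(\lambda)$ is $z\mapsto z^2$, hence $\lambda^2=\frac{1}{\pi}|\rho^{-1}(\partial D(\lambda))|$ is a modulus parameter of $\mathscr{D}$, which by (\ref{eq1.2}) equals $tr^2(A)$. Your route is algebraic: compute $tr(ST)=-\lambda$ for the distinguished element $ST\in G_\lambda$ and identify it with the Schottky generator $A$. This is an attractive alternative and arguably more transparent than the arc-length heuristic, but, as you correctly flag, the identification of $ST$ acting on $\mathbb{H}$ with $A$ acting on $\mathbf{C}P^1$ through the cover $\rho$ is the step that carries the real content and would need to be made precise; the paper sidesteps this by staying on the geometric side.
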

\begin{proof}
It is well known, that the Hecke orbifold  $\{\mathbb{H}/G_{\lambda} ~|~ \lambda >2\}$
is a topological sphere $S^2$ with a hole $D(\lambda)$ of radius $\lambda/2$, one elliptic fixed point  $e_2$ of order 2 and one
puncture $c$, see e.g.  [Schmidt \& Sheingorn 1995] \cite[p. 255]{SchShe1}. 
The elliptic point $e_2=i$ is a fixed point of the matrix $\left(\begin{smallmatrix} 0 & 1\cr -1 & 0\end{smallmatrix}\right)$ 
having  order 2 in the group $SL_2(\mathbf{Z})/\pm I$. 

Since $S^2\cong \mathbf{C}P^1$, we use (\ref{eq3.3})  to  identify $D(\lambda)\equiv D_0$ and $e_2\equiv \infty$.
Thus one gets $\mathscr{H} \cong \mathbb{H}/G_{\lambda}$. 
 Using  lemma \ref{lm3.3}, we obtain  a double covering map $\rho$  ramified in the points  
 $e_2$ and $0\in D(\lambda)$, i.e. 
\begin{equation}\label{eq3.6}
\rho :  \mathscr{D}\to  \mathbb{H}/G_{\lambda}. 
\end{equation}
\begin{remark}
Notice that $\rho^{-1}(D(\lambda))$ is a disk and $\rho^{-1}(e_2)$ 
is a regular point of $\mathscr{D}$.  The corresponding ramification
points are shown in  Figure 1.
\end{remark}

\medskip
To determine admissible values of the moduli parameter $\lambda$, 
 observe that $\lambda={1\over \pi}|\partial D(\lambda)|$, where $\partial D(\lambda)$ 
 is the boundary of the disk $D(\lambda)$.  Observe that the local map at the point $0\in D(\lambda)$
 is given by the formula $z\mapsto z^2$.  Therefore using the polar coordinates,
  we conclude that:   
\begin{equation}\label{eq3.7}
\lambda^2={1\over\pi}\left|\rho^{-1} (\partial D(\lambda))\right|
\end{equation}
is a moduli parameter of the Riemann surface $\mathscr{D}$. 
But according to (\ref{eq1.2}) any such a parameter must coincide with the $tr^2 ~(A)$,
where $A$ is the matrix in the Schottky uniformization (\ref{eq1.1}).  
Taking positive values of the square root, one gets from (\ref{eq1.2})
\begin{equation}\label{eq3.8}
\lambda=tr~(A)\in  [2, \infty) ~\bigcup  ~\{ 2\cos \left({\pi\over n}\right) ~|~n\ge 3\}. 
\end{equation}

Lemma \ref{lm3.4} is proved. 
\end{proof}
\begin{remark}
For the sake of brevity,  lemma \ref{lm3.4} is proved for the continuous 
moduli $\lambda\in (2,\infty)$. The case of the discrete moduli is treated likewise,
see remark \ref{rmk1.2}. 
\end{remark}

\bigskip
Theorem \ref{thm1.1} follows from lemma \ref{lm3.4}.

\bibliographystyle{amsplain}


\end{document}